\newtheorem{definition}{Definition}
\newtheorem{theorem}[definition]{Theorem}
\newtheorem{corollary}[definition]{Corollary}
\newtheorem{question}[definition]{Question}
\theoremstyle{remark}
\newtheorem{remark}[definition]{Remark}
\begin{document}
\title{Quasiconformal mappings and curvatures on metric measure spaces}
\author{Jialong Deng}
\date{}
\newcommand{\Addresses}{{
  \bigskip
  \footnotesize
  \textsc{Yau Mathematical Sciences Center, Tsinghua University, Beijing, China
}\par\nopagebreak
  \textit{E-mail address}: \texttt{jialongdeng@gmail.com}}}
\maketitle
\begin{abstract}
In an attempt to develop   higher-dimensional quasiconformal mappings on metric measure spaces with curvature conditions, i.e.                                                                                                                                                                                                                                                                                                                                                                                                                                                                                                                                                                                                                                                                                                                                                                                                                                                                                                                                                                                                                                                                                                                                                                                                                                                                                                                                                                                                                                                                                                                                                                                                                                                                                                                                                                                                                                                                                                                                                                                                                                                                                                                                                                                                                                                                                                                                                                                                                                                                                                                                                                                                                                                                                                                                                                                                                                                                                                                                                                                                                                                                          from  Ahlfors  to Alexandrov, we show  that  a non-collapsed $\mathrm{RCD}(0,n)$ space ($n\geq2$)  with Euclidean volume growth is an $n$-Loewner space and satisfies  the infinitesimal-to-global principle.
\end{abstract}

  Originating in  cartography that  represents the regions of the surface of the earth on a Euclidean piece of paper and   beginning in the works of Tissot \cite{zbMATH02708702}, \cite{MR3665710}, \cite{MR4321176}, Gr{\"o}tzsch \cite{zbMATH02576848}, \cite{MR4321185},   Lavrentieff \cite{zbMATH07227171},  \cite{MR4321187} and others \cite{MR3822909}, the study of   quasiconformal mappings on the Euclidean spaces $\mathbb{E}^n$  has  a rich history of over one hundred years, see the books \cite{MR2241787}, \cite{MR3642872} and the references therein.

  Beginning with Alexandrov's insight in the 1940s \cite{MR0029518},  the geometry of  metric (measure) spaces became an important part of  modern geometry \cite{MR1835418}, \cite{MR2459454}.   Since the metric structure plays an essential role in the theory of higher-dimensional   quasiconformal mappings, it is natural to see how this theory  behaves on the metrics with curvature conditions.  For example, according to Heinonen and Koskela, it is a fundamental fact that a quasiconformal homeomorphism  of the Euclidean space  $\mathbb{E}^n$  with $n\geq 2$ is quasisymmetric,  if it maps bounded sets to bounded sets \cite{zbMATH01230683}. We argue that the Euclidean metrics on $\mathbb{R}^n$  with $n\geq 2$ satisfy the infinitesimal-to-global principle.  What are the other metrics $\mathbb{R}^n$ satisfy the infinitesimal-to-global principle is a matter of interest. 
  
   We will show in the following that the metrics  $d$ that make $(\mathbb{R}^n,d, \mathcal{H}^n)$  to be    non-collapsed $\mathrm{RCD}(0,n)$ spaces with Euclidean volume growth  also satisfy  the principle.

\begin{theorem}
A quasiconformal homeomorphism $f$ of  a non-collapsed $\mathrm{RCD} (0,n)$ space with  Euclidean volume growth  and  $n\geq 2$ ($n\in \mathbb{N}$) is quasisymmetric, if it maps bounded sets to bounded sets.
\end{theorem}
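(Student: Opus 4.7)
The natural route is to reduce the theorem to the classical infinitesimal-to-global principle of Heinonen and Koskela: between two locally compact, complete, Ahlfors $Q$-regular and $Q$-Loewner metric measure spaces, every quasiconformal homeomorphism that maps bounded sets to bounded sets is automatically quasisymmetric.  Applied to the self-map $f:(X,d,\mathcal{H}^n)\to(X,d,\mathcal{H}^n)$, it therefore suffices to verify that $X$ is Ahlfors $n$-regular and $n$-Loewner.

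Ahlfors $n$-regularity follows from a two-sided volume estimate on balls.  The Bishop--Gromov inequality available in the non-collapsed $\mathrm{RCD}(0,n)$ setting gives the uniform upper bound $\mathcal{H}^n(B_r(x))\leq\omega_n r^n$ for every $x\in X$ and $r>0$, and the Euclidean volume growth hypothesis furnishes a positive asymptotic volume density $\theta>0$.  Monotonicity in Bishop--Gromov propagates this asymptotic density down to every scale and every base point, producing $\mathcal{H}^n(B_r(x))\geq\theta\omega_n r^n$ uniformly.  For the $n$-Loewner condition the strategy is to combine Ahlfors $n$-regularity with a $(1,n)$-Poincar\'e inequality and invoke the Heinonen--Koskela--Shanmugalingam characterisation.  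The Poincar\'e inequality is supplied by Rajala's $(1,1)$-Poincar\'e inequality on $\mathrm{RCD}(K,N)$ spaces, self-improved via Keith--Zhong to $(1,p)$ for some $p<n$ and a fortiori to $(1,n)$; together with the fact that $\mathrm{RCD}(0,n)$ spaces are geodesic, hence quasiconvex, this closes the loop.  Once $X$ is known to be $n$-Loewner, the theorem is a direct application of Heinonen--Koskela to $f$.

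The main technical obstacle is the $n$-Loewner step, which demands scale-invariant lower bounds on the $n$-modulus of curve families joining disjoint continua of comparable diameter and separation.  Although each individual ingredient---Bishop--Gromov in the RCD setting, Rajala's Poincar\'e inequality, Keith--Zhong self-improvement---is well documented, the delicate point is that the non-collapsed hypothesis together with Euclidean volume growth must furnish \emph{uniform} constants, independent of base point and scale, for all of these estimates simultaneously.  Once this uniform Loewner property is in hand, the reduction to Heinonen--Koskela is essentially a matter of citation.
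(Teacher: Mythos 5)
Your proposal follows essentially the same route as the paper: Ahlfors $n$-regularity from Bishop--Gromov combined with the Euclidean volume growth lower bound, the $n$-Loewner property from the $(1,1)$-Poincar\'e inequality together with Heinonen--Koskela's Theorem~5.7, and then the conclusion via their Corollary~4.8. The only superfluous step is the appeal to Keith--Zhong self-improvement: a $(1,1)$-Poincar\'e inequality already implies the $(1,n)$-Poincar\'e inequality by H\"older, so no self-improvement is needed.
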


The idea of the proof came from  Heinonen-Koskela \cite{zbMATH01230683}. That is, we need the following result to prove the above theorem.

  \begin{theorem}
 Non-collapsed $\mathrm{RCD} (0,n)$ spaces  $(X,d, \mathcal{H}^n)$ with  $n\geq 2$ ($n\in \mathbb{N}$) and   Euclidean volume growth are $n$-Loewner spaces.
\end{theorem}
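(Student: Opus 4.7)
The plan is to apply the Heinonen--Koskela characterization: a proper, $Q$-Ahlfors regular metric measure space with $Q>1$ is $Q$-Loewner if and only if it supports a weak $(1,Q)$-Poincar\'e inequality. Thus it suffices to verify, with $Q=n$, that $(X,d,\mathcal{H}^n)$ is $n$-Ahlfors regular and admits a $(1,n)$-Poincar\'e inequality; the Loewner conclusion is then automatic with quantitative dependence only on $n$ and the asymptotic volume ratio.

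For the first ingredient, the Bishop--Gromov comparison on $\mathrm{RCD}(0,n)$ gives that $r\mapsto \mathcal{H}^n(B_r(x))/(\omega_n r^n)$ is nonincreasing for every $x\in X$. The non-collapsed structure theory of De~Philippis--Gigli ensures that this ratio is bounded above by $1$, yielding the uniform upper bound $\mathcal{H}^n(B_r(x))\le \omega_n r^n$. Euclidean volume growth is precisely the statement that the asymptotic volume ratio $\theta:=\lim_{r\to\infty}\mathcal{H}^n(B_r(x))/(\omega_n r^n)$ is strictly positive; a short triangle-inequality argument shows $\theta$ is basepoint-independent, and monotonicity then produces the matching lower bound $\mathcal{H}^n(B_r(x))\ge \theta\,\omega_n r^n$ at every scale. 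Combining the two gives uniform $n$-Ahlfors regularity with constants depending only on $n$ and $\theta$.

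For the second ingredient I would invoke Rajala's theorem, which provides a weak local $(1,1)$-Poincar\'e inequality on every finite-dimensional $\mathrm{CD}(K,N)$ (hence $\mathrm{RCD}(K,N)$) space. The case $K=0$ is scale-invariant: dilations preserve the curvature condition, so Rajala's estimate upgrades to a genuinely global $(1,1)$-Poincar\'e inequality whose constants depend only on $n$. Since $n\ge 2$, H\"older's inequality immediately delivers a $(1,n)$-Poincar\'e inequality. Feeding the two ingredients into the Heinonen--Koskela theorem concludes the proof with Loewner function $\phi$ depending only on $n$ and $\theta$.

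The main obstacle, as I see it, is not the Loewner characterization itself but the second step: transferring Rajala's \emph{local} Poincar\'e inequality to a \emph{global} one with truly scale-invariant constants uses in an essential way the flatness $K=0$, and the resulting statement must be matched precisely to the hypotheses of Heinonen--Koskela (one can alternatively extract a $(1,2)$-Poincar\'e inequality from heat-kernel and Bakry--\'Emery methods available on $\mathrm{RCD}(0,n)$ and then upgrade by H\"older, which may read more cleanly in print). Once the right global Poincar\'e inequality is in hand, the rest of the argument is a direct application of standard machinery from analysis on metric measure spaces.
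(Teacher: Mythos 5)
Your proposal is correct and follows essentially the same route as the paper: establish Ahlfors $n$-regularity from Bishop--Gromov comparison together with Euclidean volume growth, obtain a $(1,1)$- (hence $(1,n)$-) Poincar\'e inequality, and feed these into the Heinonen--Koskela theorem for proper, quasiconvex, $Q$-regular spaces. The only cosmetic difference is that the paper derives the Poincar\'e inequality from Villani's theorem for non-branching $\mathrm{CD}(0,n)$ spaces (using that $\mathrm{RCD}(0,n)$ spaces are non-branching) rather than from Rajala's result, and your write-up of the Ahlfors-regularity and globalization steps is more explicit than the paper's.
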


The definitions and details will be given later.  As an application of the  two aforementioned theorems, we  can show the distortion volume inequality of quasisymmetric  mappings of  a non-collapsed $\mathrm{RCD}(0,n)$ space  $(X,d, \mathcal{H}^n)$ with  Euclidean volume growth.   

\begin{remark}
The note is a step  to answering the question  of what facts of the classical theory  on  $\mathbb{E}^n$  are applicable to  quasiconformal/quasiregular  mappings on metric measure spaces with curvature conditions.  More results are expected to be found in this direction.
\end{remark}

The paper is organized as follows. In Section \ref{Notions}, we introduce the notions of  quasiconformality and $\mathrm{RCD}$ spaces. Section \ref{results} is devoted to the main results. In Section \ref{remarks}, we give two applications of the theorems and   two further questions.

$\mathbf{Acknowledgment}$:  The author  appreciates the anonymous reviewer's
constructive feedback.  This note is a part of my proposal that was submitted to MathJobs for a postdoctoral position in May 2021. The funding is derived from a postdoctoral fellowship of Yau Mathematical Sciences Center, Tsinghua University. This note was dedicated to the mathematicians who were working in Ukraine during 2022.

\section{Preliminaries}\label{Notions}
In this section, we recall the definition of Alexandrov spaces, non-collapsed $\mathrm{RCD} (0,n)$ spaces and  quasiconformality for the reader's convenience.

 A metric space $(X,d)$ is said to be quasiconvex if there is a constant $C>0$ so that every pair of  points $x$ and $y$ in $X$ can be joined by a curve $\gamma$, whose length satisfies $l(\gamma)\leq Cd(x,y)$. A metric space $(X, d)$ is a length metric space if the distance between each pair of points equals the infimum of the lengths of curves joining the points.  Thus, a  locally compact and complete length space is quasiconvex.

  Throughout this section,  a metric space $(X, d)$ refers to a locally compact and  complete  length metric space. For $1\leq n\in \mathbb{N}$,  $\mathcal{H}^n$  refers to the $n$-dimensional Hausdorff measure of $(X,d)$. A metric measure space $(X,d,  \mathcal{H}^n)$ is a locally compact and complete length space with the full support of $n$-dimensional Hausdorff measure $\mathcal{H}^n$. 
 
 Recall that a model space in Riemannian geometry is a simply connected complete Riemannian surface with the constant sectional curvature $\kappa$.

A geodesic triangle $\bigtriangleup$ in $X$ with geodesic segments as its sides is said to satisfy the $\mathrm{CAT}(\kappa)$-inequality if it is slimmer than its comparison triangle in the model space. That is, if there is a comparison triangle $\bigtriangleup'$ in the model space with its sides of the same length as the sides of $\bigtriangleup$ such that the distance between the points on $\bigtriangleup$ is less than or equal to the distance between the corresponding points on $\bigtriangleup'$. A length metric $d$ on $X$ is said to be a locally $\mathrm{CAT}(\kappa)$-metric if every point in $X$ has a geodesically convex neighborhood, in which every geodesic triangle satisfies the $\mathrm{CAT}(\kappa)$-inequality.
 
 Similarly, an Alexandrov space $(X,d)$ with nonnegative curvature means that every point in $X$ has a geodesically convex neighborhood, in which every geodesic triangle is fatter than the comparison triangle in the Euclidean plane. More details about comparison geometry can be found in the book \cite{MR1835418} and the references therein.

 Alexandrov geometry is a generalized Riemannian manifold with sectional curvature bounded below (or above). Similarly, Lott-Sturm-Villani theory on  metric measure spaces is a synthetic   generalized Ricci curvature bounded below, see \cite{MR2480619},  \cite{MR2237206} and  \cite{MR2459454}. Later, the Riemannian curvature-dimensional condition was introduced  as a refinement in order to single out “Riemannian structures” from the “possibly Finslerian CD structures”  so that the study of $\mathrm{RCD}(K,N)$ spaces became active, see Ambrosio's survey  \cite{MR3966731} and the references therein. Recently, a generalized  scalar curvature bounded below  on  metric measure spaces was studied by the author in \cite{MR4210894}, \cite{MR4291609} and \cite{1}.  
 
  The curvature-dimension condition $\mathrm{CD}(K,N)$ of Lott–Sturm–Villani was defined through  optimal transport theory  and  the convexity properties of $N$-dimensional entropy  on the space of  all Borel probability measures over a metric spaces at the beginning. Thanks to the works \cite{MR3385639}, \cite{MR4044464}, \cite{MR4309491}, we can choose  an equivalently shorter version in the following way. 
 
 Let $(X,d,m)$ be a metric measure space with the full Borel measure $m$ and the parameters $K\in \mathbb{R}$ (lower bound on Ricci curvature) and $N\in (1, \infty)$ (upper bound on dimension) will be kept fixed. Define the Cheeger energy $\mathrm{Ch}: L^2(X, m)\to [0,\infty]$ by
 \begin{equation*}
 \mathrm{Ch}(f) := \inf \{\liminf_{i\to \infty} \int_X \mathrm{lip}^2f_idm  \mid  f_i\in \mathrm{Lip}_b(X,d)\cap L^2(X,m), \|f_i-f\|_{L^2}\to 0\},
 \end{equation*}
 where $\mathrm{Lip}_b(X,d)$  is the set of all bounded Lipschitz functions on $X$ and
 \begin{equation*}
 \mathrm{lip}f(x) := \lim_{r \to 0^+} \sup_{y \in B_r(x)\backslash\{x\}} \frac{|f(x)-f(y)|}{d(x,y)},
 \end{equation*}
if $x$ is not isolated, $\mathrm{lip} f(x):= 0$ otherwise.  Given $f \in L^2(X, m)$, a function $g\in L^2(X, m)$ is called a relaxed gradient if there exists a sequence $\{f_n\} \subset  \mathrm{Lip}(X, d)$ and $ \tilde{g} \in  L^2(X, m)$ so that
\begin{enumerate}
\item[(1).] $f_n \to f$ in $L^2(X, m)$ and $\mathrm{lip} f_n$ converge weakly to  $ \tilde{g}  \in  L^2(X, m)$.
\item[(2).] $g\geq \tilde{g} $ $m$-a.e..
\end{enumerate}
     A minimal relaxed gradient is a relaxed gradient that is minimal in $L^2$-norm in the family of relaxed gradients of $f$. If this family is non-empty, the minimal relaxed gradient which is denoted by $|\nabla f|$ exists and is unique $m$-a.e..

Then the Sobolev space $H^{1,2}= H^{1,2}(X,d, m)$ is defined as the finiteness domain of $\mathrm{Ch}$ in $L^2(X,m)$ and it is a Banach space equipped with the norm $\|f\|^2_{H^{1, 2}}:= \|f\|^2_{L^2}+ \mathrm{Ch}(f)$. $(X, d, m)$ is said to be \textit{infinitesimally Hilbertian } if $H^{1,2}(X,d, m)$ is a Hilbert space.  In
particular, for all $f_i \in H^{1,2}$ $ (i = 1, 2)$,
\begin{equation*}
<\nabla f_1, \nabla f_2>:= \lim_{t\to 0}\frac{|\nabla (f_1+tf_2)|^2- |\nabla f_1|^2}{2t}\in L^2(X,m).
\end{equation*}
is well defined.  Then,   by using the  infinitesimally Hilbertian condition,   $f\in H^{1, 2}$  is said to be in the domain of the Laplacian ($f \in D(\Delta)$) if there exists $\Delta f \in  L^2(X, m)$ so that
\begin{equation*}
\int_X g \Delta f dm + \int_X <\nabla g, \nabla f>dm = 0
\end{equation*}
 for any $g\in H^{1, 2}$.  The following definition  of $\mathrm{RCD}(K, N)$ spaces  comes from \cite[Definition~2.1]{MR4173928}.

\begin{definition}[$\mathrm{RCD}(K, N)$ spaces]
Let $(X,d,m)$ be a metric measure space, let $K\in \mathbb{R}$ and let $N\in (1, \infty)$.  We say $(X,d,m)$ is an $\mathrm{RCD}(K, N)$ spaces if the following hold:
\begin{enumerate}
\item[(1).] (Volume growth condition)  There exist $x \in X$ and $C > 1$ such that $m(B_r(x))\leq  Ce^{Cr^2}$
for all $r \in (0, \infty)$.
\item[(2).] (Riemannian structure)   The Sobolev space $H^{1,2}= H^{1,2}(X,d, m)$ is a Hilbert space. 
\item[(3).] (Sobolev-to-Lipschitz property) Any function $f\in H^{1,2}$  satisfying $|\nabla f|(y)\leq 1$ for $m$-a.e. $y\in X$ has $1$-Lipschitz representative.
\item[(4).]  (Bochner inequality)  For all $f \in D(\Delta)$ with $\Delta f \in   H^{1,2}$, 
\begin{equation*}
\frac{1}{2}\int_X|\nabla f|^2 \Delta \varphi dm \geq  \int_X \varphi [\frac{(\Delta f)^2}{N} +  <\nabla f, \nabla \Delta f> + K|\nabla f|^2] dm
\end{equation*}
for all    $\varphi \in D(\Delta) \cap  L^{\infty}(X, m)$  with $\varphi\geq 0$ and $\Delta \varphi \in L^{\infty}(X, m)$.

\end{enumerate}

\end{definition}

 In fact, a metric measure space $(X, d, m)$ is an $\mathrm{RCD}(K, N)$ space iff $(X, d, m)$ satisfies the $\mathrm{CD}(K, N)$ condition and is infinitesimally Hilbertian  \cite[Theorem~2.14]{2020arXiv200907956D}.  Typical examples of $\mathrm{RCD}$ spaces  are measured Gromov–Hausdorff limit spaces
of Riemannian manifolds with Ricci bounds from below and dimension bounds from
above, so-called Ricci limit spaces. $\mathrm{RCD}(0,n)$ spaces come out naturally as the metric cone of $\mathrm{RCD}(n-2,n-1)$  spaces.

 An $\mathrm{RCD}(K,N)$ space $(X, d, m)$ is\textit{ noncollapsed} if $n$ is a natural number and $m = \mathcal{H}^n$.  Noncollapsed $\mathrm{RCD}(K,n)$ spaces give a natural intrinsic generalization of noncollapsing Ricci limit spaces.   Furthermore, the class of $\mathrm{RCD}(K,n)$ spaces
strictly contains the noncollapsed Ricci limit spaces.   A metric cone (resp. a spherical suspension) over $\mathbb{RP}^2$ is an example of a noncollapsed $\mathrm{RCD}(0,2)$ (resp. $\mathrm{RCD}(1,3)$) space.   A convex body in $\mathbb{E}^n$ with boundary  cannot arise as a noncollapsed Ricci limit of manifolds without boundary. However, this is a noncollapsed $\mathrm{RCD}(0,n)$ space \cite[Theorem~1.10]{MR4226234}.

 Petrunin shows that an $n$-dimensional Alexandrov space with non-negative curvature and equipped with the  induced Hausdorff measure satisfies  $\mathrm{CD}(0,n)$ condition, see \cite{MR2869253} and \cite[Section~2]{MR4210894}.   A finite dimensional Alexandrov space with curvature bounded below is infinitesimally Hilbertian. Thus an $n$-dimensional Alexandrov space with non-negative curvature and equipped with the induced Hausdorff measure is a   non-collapsed $\mathrm{RCD}(0,n)$ space. 
 
The reason why  we  focus on the subject of noncollapsed $\mathrm{RCD}(0,n)$ spaces is because the  measures  are determined by the metrics. In the following,  we will elaborate on the metric definition of quasiconformality, which is one of the three commonly adopted definitions in the existing literature.
 
\begin{definition}[Quasiconformal]\label{quasiconformal}
 A homeomorphism $f:X\to Y$ between the metric spaces $(X,d_X)$ and $(Y, d_Y)$ is said to be $K$-quasiconformal if there is a constant $0<K<\infty$ so that 
\begin{equation*}
\limsup\limits_{r\to 0}\frac{L_f(x,r)}{l_f(x,r)}\leq K
\end{equation*}
for all $x\in X$, where
\begin{equation*}
L_f(x,r):=\sup\limits_{d_X(x,y)\leq r}d_Y(f(x),f(y))
\end{equation*}
and
\begin{equation*}
l_f(x,r):=\inf\limits_{d_X(x,y)\geq r}d_Y(f(x),f(y)).
\end{equation*}
\end{definition}

 Quasiconformal homeomorphisms arise not only in cartography and geometric functional theory, but also in some parts of dynamics system, topology, and Riemannian geometry, see the survey \cite{MR934326} and the references therein. 
 
  The theory of quasiconformal mappings plays a role in applied mathematics.  For example, combining   quasiconformal Teichm{\"u}ller theory  \cite{MR1730906} with scientific computing techniques,  computational quasiconformal geometry has various applications in engineering and medical imaging, see  \cite{MR2918023}, \cite{MR2908091} and the references therein.

\begin{definition}[Quasisymmetric]
 A homeomorphism $f:X\to Y$ between the metric spaces $(X,d_X)$ and $(Y, d_Y)$ is said to be quasisymmetric if there is a constant $H_f< \infty$ so that 
 \begin{equation*}
 H_f(x,r):=\frac{L_f(x,r)}{l_f(x,r)}\leq H_f
 \end{equation*}
for all $x\in X$ and all $r>0$.

Namely,
\begin{equation*}
d_X(x,a)\leq d_X(x,b)\quad implies \quad d_Y(f(x),f(a))\leq H_fd_Y(f(x),f(b))
\end{equation*}
  for each triple $x$, $a$, $b$ of points of $X$.
\end{definition}

A quasiconformal homeomorphism is a local and infinitesimal condition, whereas a quasisymmetric homeomorphism is a global condition that imposes a uniform requirement on the relative metric distortion of any triple of points.

\begin{definition}[$\eta$-quasisymmetric]
A homeomorphism $f:X\to Y$ between the metric spaces $(X,d_X)$ and $(Y, d_Y)$ is said to be $\eta$-quasisymmetric if there is a homeomorphism $\eta:[0,\infty)\to[0,\infty)$ so that 
\begin{equation*}
d_X(x,a)\leq td_X(x,b)\quad implies \quad d_Y(f(x),f(a))\leq \eta(t)d_Y(f(x),f(b))
\end{equation*}
for each $t>0$ and for each triple $x$, $a$, $b$ of points of $X$.
\end{definition}

Though $\eta$-quasisymmetry implies quasisymmetric, these two notions are not equivalent in general.   However, if $X$ and $Y$ are path-connected doubling metric spaces, then these two notions are  equivalent. 

$\eta$-quasisymmetric  mappings naturally arise in one variable complex dynamics, see \cite{zbMATH03830924}.     $\eta$-quasisymmetric mappings can be used in geometric group theory. For instance,     quasi-isometric mappings of Gromov hyperbolic spaces are in one-to-one correspondence with $\eta$-quasisymmetric mappings on the Gromov  boundaries, see \cite{zbMATH04190607}.

\begin{definition}[Ahlfors $Q$-regular]
 A metric space $(X,d)$ is said to be Ahlfors $Q$-regular if there is a constant $C\geq 1$ so that 
\begin{equation*}
C^{-1}R^Q\leq \mathcal{H}^Q(B(R))\leq CR^Q
\end{equation*}
for all $R$-balls $B(R)$ in  $X$ of radius $R$ less than the diameter of $X$ (the diameter may be infinite).
\end{definition}

Let $(M, g)$ be a complete open Riemannian $n$-manifold, then the asymptotic volume ratio of $(M,g)$ is defined as following: 
\begin{equation*}
\mathrm{AVR}_g:=\lim_{r\to \infty}\frac{\mathrm{Vol}_g(B_x(r))}{\omega_nr^n},
\end{equation*}
where $\omega_n$  is the volume of the Euclidean unit ball in $\mathbb{E}^n$.  The manifold $M$ is said to have Euclidean volume growth when $\mathrm{AVR}_g> 0$. The constant $\mathrm{AVR}_g$ is a global geometric invariant of $M$, i.e., it is independent of the base point. Also, when $\mathrm{AVR}_g> 0$, we have $\mathrm{Vol}_g(B_x(r))\geq \mathrm{AVR}_g\omega_nr^n$ for all $x\in M$ and for all $r>0$. 

Similarly, the asymptotic volume ratio can  be defined for a metric measure space $(X,d,  \mathcal{H}^n)$. That is 
\begin{equation*}
\mathrm{AVR}_d:=\lim_{r\to \infty}\frac{\mathcal{H}^n(B_x(r))}{\omega_nr^n}.
\end{equation*}

\section{Quasiconformality VS. Quasisymmetry on $\mathbb{R}^n$}\label{results}
In this section, we search  metrics with curvature conditions on $\mathbb{R}^n$ to satisfy the condition that  quasiconformal mappings are quasisymmetric.

A complete open nonnegatively curved Riemannian manifold may not have  Euclidean volume growth in general. For example,  the product metric of a standard spherical metric and a Euclidean metric  is a complete Riemannian metric with nonnegative sectional curvature and  without  Euclidean volume growth. However, a complete open non-negatively curved Riemannian manifold with  Euclidean volume growth is diffeomorphic to $\mathbb{R}^n$, since its asymptotic cone at infinity has its dimension strictly smaller than $n$ if the soul is not a point, while manifolds with Euclidean volume growth have asymptotic cones of dimension $n$. Non-negatively curved Alexandrov spaces with Euclidean volume growth do not have to be smooth or even topological manifolds.  For example, a metric cone over any ($n-1$)-dimensional Alexandrov space of curvature $\geq 1$ is Alexandrov of curvature $\geq 0$ and has Euclidean volume growth.

Assume $(\mathrm{M}, g)$ is a complete open Riemannian $n$-manifold with non-negative Ricci curvature and $\mathrm{AVR}_g=1$, then the Bishop-Gromov inequality implies that $\mathrm{M}$ is isometric to $\mathbb{E}^n$. It is known that if $n=3$ and $\mathrm{AVR}_g>0$, then $\mathrm{M}$ is contractible \cite{MR1217167}; if $n=4$ and $\mathrm{AVR}_g>0$, then the manifolds may have infinite topological types \cite{MR1779615}.  However, Perelman  shows that if $\mathrm{M}$ has  maximal Euclidean volume growth, i.e. there exists a small positive constant $a(n)$   such that  $\mathrm{AVR}_g\geq 1-a(n)>0$, then $\mathrm{M}$ is contractible and homeomorphic to $\mathbb{R}^n$ \cite[Theorem~2]{MR1231690}. Here $a(n)$ only depends  on the dimension $n\geq 2$ of the manifold. Furthermore, Cheeger and Colding show that $\mathrm{M}$ is indeed  $C^{1,\alpha}$-diffeomorphic to $\mathbb{R}^n$  on the same assumption of Perelman's theorem in \cite[Theorem~A.1.11]{MR1484888}.

\begin{remark}
However, there exists  Riemannian manifolds  with non-negative Ricci curvature and linear volume growth, thus it cannot be $Q$-regular for $Q>1$. Furthermore, there exists  Riemannian metrics $g$ with non-negative Ricci curvature  and  arbitrarily small $\mathrm{AVR}_g$ on $\mathbb{R}^n$.   For example, let  $n\geq 3$ and $f:[0,\infty)\to[0,1]$ be a smooth nonincreasing function such that $f(0)=1$ and 
\begin{equation*}
\lim\limits_{s\to\infty}f(s)=a\in(0,1],
\end{equation*}
then the warped product metric $g:=dr^2+F(r)^2d\theta^2$ is 
 the rotationally invariant  metric on $\mathbb{R}^n$. Here 
\begin{equation*}
F(r):=\int_0^rf(s)ds
\end{equation*}
 and $d\theta^2$ is the standard metric on the sphere $S^{n-1}$. If $x=(x_1,\theta_1)$ and $\tilde{x}=(x_2,\theta_2)$ are in $\mathbb{R}^n$, then one has $d_g(x,\tilde{x})\geq \|x_1-x_2\|$ and it implies that the metric $g$ is complete. One  can  show that the sectional curvature of $g$ is nonnegative and $\mathrm{AVR}_g=a^{n-1}$, see \cite[Example~2.4]{Balogh2022}.
\end{remark}

 A conformal deformation of the Euclidean metric  on  $\mathbb{R}^n$ with infinite volume and under $L^{n/2}$ scalar curvature bounds   has Euclidean volume growth, see \cite[Theorem~2.8]{MR4182081}.

 Inspired by Perelman's theorem, Kapovitch and Mondino show that  there exists  a small positive constant $\epsilon(n)$ such that if a  metric measure space  $(X,d,  \mathcal{H}^n)$ is a non-collapsed $\mathrm{RCD}(0,n)$ space and  $\mathrm{AVR}_d\geq 1-\epsilon(n)>0$,  then  $X$ is homeomorphic to $\mathbb{R}^n$ in \cite[Theorem~1.3]{MR4226234},  also see \cite[Proposition~7.1]{2022arXiv220710029H}.      Kapovitch-Mondino theorem does not hold for  $\mathrm{CD}(0,n)$ space in general. Notice that there exists  $\mathrm{CD}(0,n)$ spaces that are not   non-collapsed $\mathrm{RCD}(0,n)$ spaces. For instance,   since  $\mathbb{R}^n$ that is  endowed with the Lebesgue measure and the distance coming from a norm is a $\mathrm{CD}(0,n)$ space, one can  equip $\mathbb{R}^n$  with the $L^{\infty}$-norm such that it is not a non-collapsed  $\mathrm{RCD}(0,n)$ space.

\begin{definition}[Heinonen-Koskela]
A metric measure space $(X, d, \mu)$ is a $p$-Loewner space if there exists a decreasing function $\phi: (0,\infty)\to (0, \infty)$ so that 
\begin{equation*}
\mathrm{Mod}_p \Gamma (E, F)\geq \phi(\bigtriangleup(E,F))
\end{equation*}
for all disjoint compact connected subsets $E, F \subset X$. Here $\Gamma (E, F)$ denotes the collection of curves joining $E$ to $F$,  the $p$-modulus ($p\geq 1$) of  $\Gamma (E, F)$ is defined as 
\begin{equation*}
\mathrm{Mod}_p \Gamma (E, F):=\inf\int_X\varrho^pd\mu
\end{equation*}
where the infimum  takes over all nonnegative Borel functions $\varrho: X \to [0,\infty]$ satisfying 
\begin{equation*}
\int_\gamma\varrho ds\geq 1
\end{equation*}
for all locally rectifiable curves $\gamma\in \Gamma(E,F)$. Note that by definition the modulus of all curves in $X$ that are not locally rectifiable is zero.

And here 
\begin{equation*}
\bigtriangleup(E,F):=\frac{d(E, F)}{\min\{\mathrm{diam}(E), \mathrm{diam}(F)\}}
\end{equation*}
denotes the relative distance between E and F and $\mathrm{diam}(E)$ denotes the diameter of $E$. 

\end{definition}

We  recap the two main theorems mentioned at the beginning of the paper and  recall the definition of non-branching geodesic space   for   convenience.  Let $\mathrm{Geo}(X)$ be the space of constant speed geodesics  in the geodesic space $(X, d)$, i.e.,
\begin{equation*}
\mathrm{Geo}(X):=\{\gamma\in C([0,1]; X):  d(\gamma(t), \gamma(s))=|t-s| d(\gamma(0), \gamma(1)), \mbox{ for  any  t,  s $\in$ [0, 1]}\}.
\end{equation*}
 The geodesic space $(X, d)$ is  \textit{non-branching} iff for any $\gamma_1$, $\gamma_2\in \mathrm{Geo}(X)$ we have: if there exists $t\in (0, 1)$ such that $\gamma_1(s)=\gamma_2(s)$ for all $s\in [0, t]$, then $\gamma_1=\gamma_2$. 

\begin{theorem}\label{Loewner}
 Non-collapsed $\mathrm{RCD}(0,n)$ spaces  $(X,d, \mathcal{H}^n)$ with  $n\geq 2$ ($n\in \mathbb{N}$) and  Euclidean volume growth are $n$-Loewner spaces.
\end{theorem}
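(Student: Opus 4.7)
The plan is to verify the three ingredients needed in the Heinonen--Koskela characterization of Loewner spaces: global Ahlfors $n$-regularity, a $(1,n)$-Poincaré inequality, and quasiconvexity. Once these are in place, the statement that an Ahlfors $n$-regular, quasiconvex metric measure space supporting a $(1,n)$-Poincaré inequality is $n$-Loewner is classical, so the content of the proof lies in establishing each of the hypotheses for non-collapsed $\mathrm{RCD}(0,n)$ spaces with Euclidean volume growth.

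First I would establish Ahlfors $n$-regularity. The Bishop--Gromov volume monotonicity on a non-collapsed $\mathrm{RCD}(0,n)$ space asserts that the density $r \mapsto \mathcal{H}^n(B_r(x))/(\omega_n r^n)$ is non-increasing in $r$ for every $x\in X$. Taking $r\to\infty$ gives the lower bound $\mathcal{H}^n(B_r(x))\geq \mathrm{AVR}_d\,\omega_n r^n$, which is positive thanks to Euclidean volume growth; taking $r\to 0^+$ and invoking De Philippis--Gigli's bound $\theta_n(x)\leq 1$ for the density on non-collapsed $\mathrm{RCD}(0,n)$ spaces gives the upper bound $\mathcal{H}^n(B_r(x))\leq \omega_n r^n$. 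Combining these yields Ahlfors $n$-regularity with constant $C=\max(1,\mathrm{AVR}_d^{-1})$ at all scales and uniformly in the base point.

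Next I would invoke the Poincaré inequality available in the synthetic Ricci-curvature setting: $\mathrm{RCD}(K,N)$ spaces support a local $(1,1)$-Poincaré inequality (Rajala, and subsequent refinements), which in the non-negative curvature case $K=0$ upgrades to a global $(1,1)$-Poincaré inequality for minimal upper gradients. Since Ahlfors $n$-regularity implies the doubling condition, Hölder's inequality promotes the $(1,1)$-Poincaré inequality to a $(1,n)$-Poincaré inequality. Quasiconvexity is automatic: the space is locally compact, complete and length, hence geodesic (Hopf--Rinow), which gives quasiconvexity with constant $1$.

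With the triple (Ahlfors $n$-regular, quasiconvex, $(1,n)$-Poincaré) established, the Heinonen--Koskela theorem immediately delivers a decreasing function $\phi:(0,\infty)\to(0,\infty)$ such that $\mathrm{Mod}_n\Gamma(E,F)\geq\phi(\bigtriangleup(E,F))$ for all disjoint compact continua $E,F\subset X$, which is the desired $n$-Loewner property. The main obstacle I anticipate is not a new estimate but rather the careful assembly of the right black-box results: verifying that the density upper bound $\theta_n\leq 1$ and the global (not merely local) $(1,1)$-Poincaré inequality are both available in the non-collapsed $\mathrm{RCD}(0,n)$ category, and that Heinonen--Koskela's criterion applies in the generality of metric measure spaces rather than just loop spaces or Ahlfors regular metric manifolds. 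If the global Poincaré inequality is not directly quotable for $K=0$, I would fall back on the local version together with the uniform doubling constant (from Ahlfors regularity) and a standard chaining/telescoping argument on geodesics to produce Poincaré inequalities at all scales with uniform constants.
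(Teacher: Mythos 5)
Your proposal is correct and follows essentially the same route as the paper: verify the hypotheses of the Heinonen--Koskela theorem (properness/quasiconvexity, Ahlfors $n$-regularity, and a Poincar\'e inequality) and then quote their Loewner criterion, the only cosmetic difference being that you source the $(1,1)$-Poincar\'e inequality from Rajala's work on $\mathrm{CD}(K,N)$ spaces while the paper routes it through non-branching plus Villani's Theorem~30.26. If anything, your write-up is more complete on one point: the paper's proof of this theorem only says ``doubling,'' whereas Heinonen--Koskela's Theorem~5.7 needs Ahlfors $n$-regularity, which you establish explicitly via Bishop--Gromov, the density bound $\theta_n\leq 1$ of De Philippis--Gigli, and the Euclidean volume growth assumption (the paper defers this argument to the proof of the quasisymmetry theorem).
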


\begin{proof}
 Since  an $\mathrm{RCD}(0,n)$ space with  $n\geq 2$  is non-branching \cite[Theorem~1.3]{2020arXiv200907956D},  non-collapsed $\mathrm{RCD}(0,n)$ spaces  with  $n\geq 2$ satisfy $(1,1)$-Poincare inequality \cite[Theorem~30.26]{MR2459454}.  In addition,  non-collapsed $\mathrm{RCD}(0,n)$ spaces with Euclidean volume growth are proper, doubling and  quasiconvex,  then Heinonen-Koskela theorem implies that these  are $n$-Loewner spaces \cite[Theorem~5.7]{zbMATH01230683}.
\end{proof}

\begin{remark}
The non-collapsed condition is not necessary for this theorem to hold. Having said that,  we state the theorem with non-collapsed condition because  our target is  the  metrics   on $\mathbb{R}^n$.

\end{remark}

\begin{theorem}\label{quasisymmetry}
A quasicomformal homeomorphism $f$ of  a non-collapsed $\mathrm{RCD}(0,n)$ space with Euclidean volume growth and  $n\geq 2$ ($n\in \mathbb{N}$) is quasisymmetric, if it maps bounded sets to bounded sets.
\end{theorem}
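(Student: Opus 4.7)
The plan is to derive the theorem from Theorem \ref{Loewner} together with the Heinonen-Koskela criterion \cite{zbMATH01230683} promoting quasiconformality to quasisymmetry on Loewner spaces. Their criterion applies to a quasiconformal homeomorphism $f:X\to Y$ between Ahlfors $Q$-regular metric spaces for which $X$ is $Q$-Loewner, provided $f$ maps bounded sets to bounded sets. Since the statement above concerns a self-homeomorphism, source and target coincide; so it remains to verify that a non-collapsed $\mathrm{RCD}(0,n)$ space with Euclidean volume growth is globally Ahlfors $n$-regular, the $n$-Loewner hypothesis being precisely Theorem \ref{Loewner}.

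For the Ahlfors regularity, I would invoke the Bishop-Gromov volume comparison available on non-collapsed $\mathrm{RCD}(0,n)$ spaces, which says that $r\mapsto \mathcal{H}^n(B_r(x))/(\omega_n r^n)$ is non-increasing in $r$, with limit as $r\to 0^+$ bounded above by $1$ (since $X$ is non-collapsed) and limit as $r\to\infty$ equal to $\mathrm{AVR}_d>0$ (by the Euclidean volume growth hypothesis, using that the asymptotic volume ratio is independent of the base point as noted in Section \ref{Notions}). Monotonicity then forces
\[
\mathrm{AVR}_d \cdot \omega_n\, r^n \;\leq\; \mathcal{H}^n(B_r(x)) \;\leq\; \omega_n\, r^n
\]
for every $x\in X$ and every $r>0$, which is uniform Ahlfors $n$-regularity with constants depending only on $n$ and $\mathrm{AVR}_d$.

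With this in hand, the remaining structural ingredients needed by Heinonen-Koskela are already present: $X$ is proper (locally compact and complete), connected, and quasiconvex (a length space). Combined with global Ahlfors $n$-regularity and the $n$-Loewner conclusion of Theorem \ref{Loewner}, and using the hypothesis that $f$ maps bounded sets to bounded sets, their theorem delivers that $f$ is (even $\eta$-)quasisymmetric, with the distortion function $\eta$ quantitatively determined by $n$, $\mathrm{AVR}_d$, the quasiconformal constant of $f$, and the Loewner function from Theorem \ref{Loewner}.

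The main obstacle I anticipate is not in the logical flow but in the careful bookkeeping of how the source regularity transfers to the image under $f$: in the self-homeomorphism interpretation this is automatic, but if one wishes to state the result for homeomorphisms onto a different metric space one must extract Ahlfors regularity and linear local connectedness of $f(X)$ from the hypothesis that $f$ is quasiconformal and sends bounded sets to bounded sets. A secondary subtlety, flagged in the remark following Theorem \ref{Loewner}, is that if one wanted to drop the non-collapsed hypothesis, the Ahlfors-regularity step would require rethinking, since the sharp upper bound $\mathcal{H}^n(B_r(x))\leq\omega_n r^n$ is exactly where non-collapsedness enters.
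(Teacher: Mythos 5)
Your proposal follows the same route as the paper's proof: establish Ahlfors $n$-regularity from Bishop--Gromov together with the Euclidean volume growth hypothesis, combine it with the $n$-Loewner property from Theorem \ref{Loewner}, and invoke the Heinonen--Koskela criterion \cite[Corollary~4.8]{zbMATH01230683} for quasiconformal maps sending bounded sets to bounded sets. The only difference is that you spell out the regularity step (monotonicity of $r\mapsto \mathcal{H}^n(B_r(x))/(\omega_n r^n)$, density at most $1$ by non-collapsedness, limit $\mathrm{AVR}_d>0$ at infinity) where the paper merely asserts it, which is a correct and welcome elaboration.
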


\begin{proof}
 Since a  non-collapsed $\mathrm{RCD}(0,n)$ space $(X,d, \mathcal{H}^n)$ satisfies Bishop-Gromov inequality,  $(X,d, \mathcal{H}^n)$ with Euclidean volume growth  is  Ahlfors $n$-regular. On the other hand, Theorem \ref{Loewner} implies that it is  an $n$-Loewner space.  Then  the quasiconformal homeomorphisms  that maps bounded sets to bounded sets  between $n$-regular   Loewner spaces ($n\geq 2 $)  are quasisymmetric \cite[Corollary~4.8]{zbMATH01230683}.
\end{proof}

Thus,  the homeomorphism $f$ is also $\eta$-quasisymmetric.  We do not need any of the priori regularity assumptions on the metrics or homeomorphisms to obtain global bounds. Since the classic Liouville theorem shows that the unit balls in $\mathbb{R}^n$ with Euclidean metric  ($n\geq 3$) can be conformally equivalent to half-spaces,  the condition that maps bounded sets to bounded sets in Theorem \ref{quasisymmetry} is necessary. 

Since non-collapsed $\mathrm{RCD}(K,n)$ spaces ($K >0$) are compact according to the generalized Bonnet-Meyer theorem, a quasiconformal  homeomorphism of a  non-collapsed $\mathrm{RCD}(K,n)$ space  $(X,d, \mathcal{H}^n)$   is quasisymmetric. The proof of Theorem \ref{quasisymmetry} shows that a quasiconformal homeomorphism from non-collapsed $\mathrm{RCD}(0,n)$ spaces  $(X,d, \mathcal{H}^n)$  with maximal Euclidean volume growth and $n\geq 2$ to $\mathbb{E}^n$ is quasisymmetric.

\begin{remark}
A  separable metric space is said to be purely $n$-dimensional if every non-empty open subset has the topological dimension $n$. Then a purely $n$-dimensional, proper, geodesically complete $\mathrm{CAT}$(0) space $(X,d)$ with $\mathrm{AVR}_d(X)< 3\omega_n/2$ is  homeomorphic to $\mathbb{R}^n$ \cite[Theorem~1.2]{zbMATH07491584}.  It is not clear to the author  whether quasiconformal homeomorphism of the space satisfies the infinitesimal-to-global principle. 

\end{remark}

\begin{remark}
The hyperbolic $n$-plane $\mathbb{H}^n$ is an $n$-Loewner space. However,  the volume of balls in  $\mathbb{H}^n$ increases exponentially with respect to the radius of the ball rather than polynomially as in the Euclidean space such that $\mathbb{H}^n$  is not $n$-regular. 
\end{remark}

\begin{remark}
A  complete open Riemannian $n$-manifold with uniformly positive scalar curvature is also not $n$-regular in general. Because $\mathbb{R}^3$ can be given a complete Riemannian metric $g_s$ with scalar curvature greater than  $2$,   one can take the metric product of $\mathbb{H}^n$ and $(\mathbb{R}^3, g_s)$.  Then the scalar curvature of the product manifold is bounded below by $1$ and the volume of the balls in the product manifold has  exponential growth.
\end{remark}

\section{Final Remarks}\label{remarks}

Since the  second-order differential calculus is developed on  $\mathrm{RCD}$ spaces \cite{MR3756920}, \cite{MR4321459}, one can also develop a distortion theory of quasiconformal  mappings in a non-collapsed $\mathrm{RCD}(0,n)$ space  $(X,d, \mathcal{H}^n)$ with Euclidean volume growth,   which deals with the estimates for the modulus of continuity and change of distances under these mappings. We will give two generalized results in this section.

Assume that $f$ is an $\eta$-quasisymmetric  homeomorphism of a non-collapsed $\mathrm{RCD}(0,n)$ space  $(X,d, \mathcal{H}^n)$ with Euclidean volume growth and  $n\geq 2$, then one can define the volume derivative of $f$ at $x\in X$ as
\begin{equation*}
\mu_f(x):=\lim\limits_{r\to 0}\frac{\mathcal{H}^n(f(B_x(r)))}{\mathcal{H}^n(B_x(r))}.
\end{equation*} 
This limit exists according to the Lebesgue-Radon-Nikodym theorem and it is finite for almost every $x$ in $X$. The function $\mu_f$ is an $\mathcal{H}^n$-measurable function on $X$ and is known as the generalized Jacobian of $f$.

 Recall that a doubling Borel measure $\rho$ is $\mathrm{A}_\infty$-related to    the Hausdorff measure $\mathcal{H}^n$ in $X$ if for each $\epsilon>0$ there is $\delta>0$ such that $\mathcal{H}^n(E)<\delta\mathcal{H}^n(E) $ implies $\rho(E)<\epsilon \rho(E)$.  The function $\mu_f$ can be related to the pull-back measure $f^*\mathcal{H}^n(E):=\mathcal{H}^n(f(E))$ in the following way.

\begin{theorem}\label{A-related}
Let $f$ be an $\eta$-quasisymmetric  homeomorphism of a non-collapsed $\mathrm{RCD}(0,n)$ space  $(X,d, \mathcal{H}^n)$ with Euclidean volume growth and  $n\geq 2$, then the pull-back measure $f^*\mathcal{H}^n$ is $\mathrm{A}_\infty$-related to  $\mathcal{H}^n$ in $X$. Moreover, $\mathrm{d}f^*\mathcal{H}^n=\mu_f\mathrm{d}\mathcal{H}^n$ with $\mu_f>0$ for $\mathcal{H}^n$-almost every $x$ in $X$, and there is $\epsilon>0$ such that 
\begin{equation*}
(\intbar_B\mu_f^{1+\epsilon}\mathrm{d}\mathcal{H}^n)^{\frac{1}{1+\epsilon}}\leq C \intbar_B\mu_f\mathrm{d}\mathcal{H}^n
\end{equation*}
 for all balls $B$ in $X$, quantitatively.
\end{theorem}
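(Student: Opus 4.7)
The plan is to reduce everything to the classical Heinonen-Koskela framework for quasisymmetric mappings between Ahlfors regular Loewner spaces. By Bishop-Gromov together with Euclidean volume growth, the space $(X,d,\mathcal{H}^n)$ is Ahlfors $n$-regular; by Theorem \ref{Loewner} it is $n$-Loewner, in particular proper, doubling and quasiconvex. These are precisely the hypotheses of the Heinonen-Koskela distortion theory applied with both source and target equal to $X$.

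The central step is to establish a quantitative two-sided volume comparison for images of balls of the form $\mathcal{H}^n(f(B_r(x))) \asymp \mathrm{diam}(f(B_r(x)))^n$, with constants depending only on $\eta$, the Ahlfors regularity constant and the Loewner function. The upper bound is immediate from Ahlfors $n$-regularity applied to a ball in $X$ containing $f(B_r(x))$ of radius comparable to $\mathrm{diam}(f(B_r(x)))$. The lower bound is the metric analog of Gehring's estimate: for any concentric pair of balls with bounded ratio of radii, the $n$-modulus of the family of curves joining them is bounded away from zero by the $n$-Loewner property, and the quasiconformal (hence quasisymmetric) invariance of $n$-modulus transfers this lower bound to the image. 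Quasisymmetry then identifies $\mathrm{diam}(f(B_r(x)))$ with both $L_f(x,r)$ and $l_f(x,r)$ up to multiplicative constants, yielding the two-sided estimate.

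With the two-sided volume comparison in hand, a straightforward chaining argument using quasisymmetry shows that $f^*\mathcal{H}^n$ is itself a doubling Borel measure on $X$. A second standard chaining, combining the quasisymmetry inequality $d_Y(f(x),f(a)) \leq \eta(t) d_Y(f(x),f(b))$ with the volume comparison, promotes this to the $A_\infty$-relation with $\mathcal{H}^n$ in the sense defined in the excerpt. The Lebesgue-Radon-Nikodym theorem then furnishes the decomposition $\mathrm{d}f^*\mathcal{H}^n = \mu_f\, \mathrm{d}\mathcal{H}^n$ with $\mu_f>0$ for $\mathcal{H}^n$-almost every $x$, and the reverse Hölder inequality with some $\epsilon>0$ is a consequence of the $A_\infty$ condition on a doubling metric measure space via Gehring's self-improvement lemma, which is valid verbatim in this setting.

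The main obstacle, and the only step that genuinely uses the particular geometry of non-collapsed $\mathrm{RCD}(0,n)$ spaces beyond Ahlfors regularity, is the volume distortion lower bound $\mathcal{H}^n(f(B))\geq c\,\mathrm{diam}(f(B))^n$: this is exactly where the $n$-Loewner property of Theorem \ref{Loewner} enters, converting the infinitesimal quasiconformal control of $f$ into a global comparison of measure with diameter. Once this single estimate is in hand, the doubling of $f^*\mathcal{H}^n$, the $A_\infty$-relation, and the self-improved Gehring inequality are all formal consequences, so the heart of the argument is this Loewner-powered volume estimate.
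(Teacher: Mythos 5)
Your proposal is correct and follows essentially the same route as the paper: the paper's proof consists of verifying the Poincar\'e inequality and Ahlfors $n$-regularity and then citing Heinonen--Koskela's Theorem~7.11, whose internal argument (Loewner-based lower volume bound on images of balls, doubling of the pull-back measure, the $A_\infty$ relation, and the reverse H\"older inequality via Gehring's lemma on a space of homogeneous type) is exactly what you reconstruct. The only difference is one of granularity --- you unpack the cited theorem rather than invoking it --- so there is nothing substantive to compare.
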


\begin{proof}
 Since non-collapsed $\mathrm{RCD}(0,n)$ spaces  with  $n\geq 2$ satisfy $(1,1)$-Poincare inequality, then the pull-back measure $f^*\mathcal{H}^n(E)$ is $\mathrm{A}_\infty$-related to the Hausdorff measure $\mathcal{H}^n$ in $X$ \cite[Theorem~7.11]{zbMATH01230683}, \cite[Theorem~1.0.4]{MR2415381}. 
\end{proof}

Therefore,  Theorem \ref{A-related} implies that   $\mathcal{H}^n(E)=0$ if and only if $\mathcal{H}^n(f(E))=0$ for an $ \mathcal{H}^n$-measurable subset $E\subset X$. That is,  $f$ and its inverse are absolutely continuous.   Theorem \ref{A-related} also  implies  that  the  $\eta$-quasisymmetric  homeomorphism  $f$ preserves the dimensions of the sets of Hausdorff dimension $n$.  Furthermore, we can bound the measure of the image of a set by the measure of the set in the following way.

\begin{corollary}\label{Schwarz}
Let $f$ be  an  $\eta$-quasisymmetric  homeomorphism of a non-collapsed $\mathrm{RCD}(0,n)$ space  $(X,d, \mathcal{H}^n)$ with Euclidean volume growth and  $n\geq 2$, let $\epsilon$ be the constant in Theorem \ref{A-related} and let  $F$ be a compact subset of  $X$,   then for each $a\in (0,\epsilon/1+\epsilon)$ there exists a constant $b$ such that 
\begin{equation*}
\mathcal{H}^n(f(E))\leq b\mathcal{H}^n(E)^a
\end{equation*}
for each $\mathcal{H}^n$-measurable subset $E$ of  $F$.
\end{corollary}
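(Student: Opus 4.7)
The plan is to deduce the corollary directly from the reverse Hölder inequality for the Jacobian $\mu_f$ provided by Theorem \ref{A-related}. By that theorem one has $\mathrm{d}f^*\mathcal{H}^n = \mu_f\,\mathrm{d}\mathcal{H}^n$, so for any measurable $E\subset X$,
\begin{equation*}
\mathcal{H}^n(f(E)) = \int_E \mu_f\,\mathrm{d}\mathcal{H}^n.
\end{equation*}
I would then apply Hölder's inequality with conjugate exponents $1+\epsilon$ and $(1+\epsilon)/\epsilon$ to obtain
\begin{equation*}
\mathcal{H}^n(f(E)) \leq \left(\int_E \mu_f^{1+\epsilon}\,\mathrm{d}\mathcal{H}^n\right)^{\frac{1}{1+\epsilon}} \mathcal{H}^n(E)^{\frac{\epsilon}{1+\epsilon}}.
\end{equation*}

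The second step is to control the $L^{1+\epsilon}$ integral of $\mu_f$ on $E$ by a constant depending only on $F$. Since $X$ is proper and $F$ is compact, fix a ball $B\supset F$. Because $E\subset F\subset B$, the reverse Hölder estimate of Theorem \ref{A-related} applied to $B$ yields
\begin{equation*}
\int_E \mu_f^{1+\epsilon}\,\mathrm{d}\mathcal{H}^n \leq \int_B \mu_f^{1+\epsilon}\,\mathrm{d}\mathcal{H}^n \leq \mathcal{H}^n(B)\,C^{1+\epsilon}\left(\intbar_B \mu_f\,\mathrm{d}\mathcal{H}^n\right)^{1+\epsilon} =: M_F.
\end{equation*}
This quantity is finite: an $\eta$-quasisymmetric homeomorphism automatically sends bounded sets to bounded sets, so $f(B)$ is bounded, and the Ahlfors $n$-regularity of non-collapsed $\mathrm{RCD}(0,n)$ spaces with Euclidean volume growth forces $\int_B \mu_f\,\mathrm{d}\mathcal{H}^n = \mathcal{H}^n(f(B)) < \infty$. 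Substituting into the Hölder estimate gives the corollary with the endpoint exponent $a_0 := \epsilon/(1+\epsilon)$ and constant $M_F^{1/(1+\epsilon)}$.

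Finally, to reach any $a\in(0,a_0)$, I would absorb the surplus power via the trivial bound $\mathcal{H}^n(E)\leq\mathcal{H}^n(F)$: since $\mathcal{H}^n(E)^{a_0} \leq \mathcal{H}^n(F)^{a_0-a}\,\mathcal{H}^n(E)^{a}$, the constant $b := M_F^{1/(1+\epsilon)}\,\mathcal{H}^n(F)^{a_0-a}$ achieves the stated inequality. No serious obstacle is anticipated; the argument is essentially a two-line deduction from Theorem \ref{A-related} together with Hölder's inequality. The only delicate point is that the reverse Hölder estimate in Theorem \ref{A-related} is formulated ball-by-ball, which is precisely why the compactness of $F$ enters the hypothesis and why the constant $b$ must be allowed to depend on $F$.
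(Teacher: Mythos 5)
Your proposal is correct and follows essentially the same route as the paper: write $\mathcal{H}^n(f(E))=\int_E\mu_f\,\mathrm{d}\mathcal{H}^n$, apply H\"older's inequality, and invoke the local higher integrability of $\mu_f$ from Theorem \ref{A-related}. The only cosmetic difference is that you work at the endpoint exponent $1+\epsilon$ and then absorb the surplus power $\mathcal{H}^n(E)^{a_0-a}$ into the constant, whereas the paper calibrates the H\"older exponent to $q=1/(1-a)\in(1,1+\epsilon)$ so as to land on $a$ directly; your version is in fact slightly more explicit about why $\int_F\mu_f^{1+\epsilon}\,\mathrm{d}\mathcal{H}^n$ is finite.
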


\begin{proof}
Fix $a\in (0, \frac{\epsilon}{1+\epsilon})$ and let $q=\frac{1}{1-a}\in (1, 1+\epsilon)$. Since $\mu_f$ is locally $L^q$-integrable  in $X$ by Theorem \ref{A-related}, then 
\begin{equation*}
b:=(\int_E\mu_f^q\mathrm{d}\mathcal{H}^n)^{\frac{1}{q}}<\infty,
\end{equation*}
and for each measurable $E\subset F$, we have

\begin{equation*}
\mathcal{H}^n(f(E))=\int_E\mu_f\mathrm{d}\mathcal{H}^n\leq (\int_F\mu_f^q\mathrm{d}\mathcal{H}^n)^{\frac{1}{q}}\mathcal{H}^n(E)^a=b\mathcal{H}^n(E)^a.
\end{equation*}
\end{proof}

 However,  based on the existence of the Cantor sets on  $\mathbb{E}^n$,  Gehring and  V{\"a}is{\"a}l{\"a} \cite{MR324028} show that quasiconformal homeomorphisms of $\mathbb{E}^n$, $n\geq 2$, can distort the Hausdorff dimensions of the subsets, whose  Hausdorff dimensions are not zero or $n$. Thus, the quasiconformal homeomorphisms of  $\mathbb{E}^n$ can distort the perimeters of the subsets. 
 
 It is not clear to the author  whether or not quasiconformal homeomorphisms  of a non-collapsed $\mathrm{RCD}(0,n)$ space  $(X,d, \mathcal{H}^n)$ with Euclidean volume growth and  $n\geq 2$ can distort the Hausdorff dimension of subsets. Recall that   the perimeter of $E$ on the metric measure space $(X,d, \mathcal{H}^n)$ can be defined as the relaxed Minkowski content of $E$ for a finite  $ \mathcal{H}^n$-measureable set $E$  \cite[Theorem~3.6]{MR3614662}.

 Some of the examples of the distortion theory of quasiconformal  mappings in $\mathbb{E}^n$ are  the quasiconformal counterparts of the   Schwarz Lemma by Gr{\"o}tzsch \cite{zbMATH02553182},  the classical Schwarz-Pick-Ahlfors Lemma \cite{MR1501949}, \cite{MR1704258}  and Mori-Fehlmann-Vuorinen theorem \cite{MR79091}, \cite{MR975570}.

\begin{theorem}[Mori-Fehlmann-Vuorinen Theorem]
Let $\mathbb{B}^n$ be the unit ball of the Euclidean space $(\mathbb{E}^n, d_{\mathbb{E}^n})$, $n\geq 2$, and $f$ be a $K$-quasiconformal mapping of $\mathbb{B}^n$ onto $\mathbb{B}^n$ with $f(0)=0$. Then, 
\begin{equation*}
d_{\mathbb{E}^n}(f(x),f(y))\leq M(n,K)d_{\mathbb{E}^n}(x,y)^{K^\frac{1}{1-n}}
\end{equation*}
for all $x,y\in \mathbb{B}^n$ and  the constant $M(n,K)$ has the following three properties:
\begin{enumerate}
\item[(1)]  $M(n,K)\to 1$ as $K\to 1$, uniformly in $n$;
\item[(2)]  $M(n,K)$  remains bounded for fixed $K$ and varying $n$;
\item[(3)]  $M(n,K)$  remains bounded for fixed $n$ and varying $K$.
\end{enumerate}

\end{theorem}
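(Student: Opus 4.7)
The plan is to prove the Mori--Fehlmann--Vuorinen Theorem by the classical route through the $n$-modulus of curve families and extremal ring domains. The central analytic input is the $K$-quasi-invariance of the $n$-modulus under quasiconformal homeomorphisms: for any curve family $\Gamma$ in $\mathbb{B}^n$,
\begin{equation*}
K^{-1}\,\mathrm{Mod}_n\,\Gamma \;\leq\; \mathrm{Mod}_n\,f(\Gamma) \;\leq\; K\,\mathrm{Mod}_n\,\Gamma.
\end{equation*}
Since $\mathbb{E}^n$ is a non-collapsed $\mathrm{RCD}(0,n)$ space with maximal Euclidean volume growth, Theorem \ref{Loewner} and the conformal-capacity formulation give access to this inequality, but here we use it in its original Euclidean form where it is classical.

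First, I would fix distinct $x,y\in \mathbb{B}^n$ with $|x|\leq |y|$ and introduce the Teichm\"uller ring $R_T(t)=\mathbb{R}^n\setminus(\overline{[-e_1,0]}\cup[te_1,\infty \cdot e_1])$, together with the Gr\"otzsch ring $R_G(s)=\mathbb{B}^n\setminus[0,s\,e_1]$, whose conformal capacities $\tau_n(t)$ and $\gamma_n(s)$ are the tabulated special functions of Vuorinen's distortion theory. After normalizing by rotations fixing $0$, the ring separating $\{0,f(x)\}$ from $\{f(y),\partial\mathbb{B}^n\}$ has modulus bounded between $K^{-1}$ and $K$ times the modulus of a corresponding ring separating $\{0,x\}$ from $\{y,\partial\mathbb{B}^n\}$. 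The asymptotic $\gamma_n(s)\sim \omega_{n-1}(\log s)^{1-n}$ as $s\to\infty$, inserted into the functional inequality $\gamma_n(s)\leq K\gamma_n(\varphi_{K,n}(s))$, is exactly what produces the H\"older exponent $K^{1/(1-n)}$ after inversion: raising the logarithmic leading term to the power $1/K$ and re-exponentiating turns a modulus comparison into a distance comparison
\begin{equation*}
|f(x)-f(y)|\;\leq\; M(n,K)\,|x-y|^{K^{1/(1-n)}},
\end{equation*}
with $M(n,K)$ equal to an explicit value of the distortion function $\varphi_{K,n}$.

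Finally, the three properties of $M(n,K)$ would be verified as follows. Property (3) is a direct consequence of the continuity of $\varphi_{K,n}$ in $K\in[1,\infty)$ for each fixed $n$, since the defining integral depends real-analytically on $K$. Property (1) follows from $\varphi_{1,n}=\mathrm{id}$ together with a quantitative modulus of continuity for $\varphi_{K,n}$ at $K=1$ that is uniform in $n$; this in turn reduces to uniform-in-$n$ control of the derivative of $\gamma_n$ near its asymptotic regime. Property (2), the uniform boundedness in $n$ for fixed $K$, is the main obstacle: one must show that the multiplicative error introduced by the dilatation does not blow up as the dimension grows. This is the Fehlmann--Vuorinen refinement of Mori's original two-dimensional inequality and requires a delicate dimension-free asymptotic analysis of $\gamma_n$ (and equivalently of the capacity of spherical caps) as $n\to\infty$; all other ingredients are dimension-free or dimension-tame, so the work concentrates entirely in this step.
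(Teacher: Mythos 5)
The paper itself gives no proof of this statement: it is quoted as a known classical result, with the two-dimensional case attributed to Mori \cite{MR79091} and the higher-dimensional refinement with the stated properties of $M(n,K)$ to Fehlmann--Vuorinen \cite{MR975570}. So there is nothing in the paper to compare your argument against line by line; what can be said is whether your reconstruction would actually deliver the theorem. Your overall route --- quasi-invariance of the $n$-modulus, comparison of Gr\"otzsch/Teichm\"uller rings, the asymptotic $\gamma_n(s)\sim\omega_{n-1}(\log s)^{1-n}$, and inversion of the distortion function $\varphi_{K,n}$ --- is indeed the standard skeleton of the classical proof, and the mechanism you describe does produce the H\"older exponent $K^{1/(1-n)}$ (modulo the imprecision that the logarithm gets raised to the power $K^{\pm 1/(n-1)}$, not $1/K$, when the $(1-n)$-th powers are unwound).

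The genuine gap is that the three properties of $M(n,K)$ --- which are the actual content of the Fehlmann--Vuorinen contribution, and the reason the theorem carries their names --- are not proved but only described as tasks. You yourself flag property (2) as ``the main obstacle'' requiring ``a delicate dimension-free asymptotic analysis of $\gamma_n$,'' and property (1) as reducing to ``uniform-in-$n$ control'' of $\varphi_{K,n}$ near $K=1$; neither estimate is carried out, and these uniformity statements are precisely where the difficulty lies (the na\"ive bounds on $\gamma_n$ and $\tau_n$ degenerate as $n\to\infty$, so one needs dimension-free capacity estimates, e.g.\ via spherical symmetrization, that you do not supply). A secondary imprecision: for the two-point inequality with arbitrary $x,y\in\mathbb{B}^n$ (not just $y=0$), the ring comparison must be set up with a Teichm\"uller-type configuration using both points together with the normalization $f(0)=0$, and the reduction from the modulus inequality to the displayed distance inequality with a single constant $M(n,K)$ requires separating the cases $|x-y|$ small versus comparable to $1$; your sketch elides this. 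As written, the proposal is a correct map of the classical proof rather than a proof.
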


Motivated by the similarity of the inequalities in  Corollary \ref{Schwarz} and Mori-Fehlmann-Vuorinen Theorem, one could ask the following question:
\begin{question}
Do the classical Schwarz-Pick-Ahlfors Lemma  and Mori-Fehlmann-Vuorinen Theorem hold for $\mathrm{CAT}(-1)$ spaces?
\end{question}

If we do not require the homeomorphisms mapping in  Definition \ref{quasiconformal} (of quasiconformality), then we get the definition of quasiregular mappings on metric spaces.  To answer Zorich's question, Rickman shows that   a nonconstant  quasiregular mapping $f:\mathbb{E}^n \to\mathbb{E}^n$ can only  omit finite values for $n\geq 3$ \cite{MR583633}.   Zorich \cite{zbMATH03287913} shows that any locally injective  quasiconformal mapping  $f: \mathbb{E}^n\to \mathbb{E}^n$ for $n\geq 3$ is globally injective.  
\begin{question}
Can those two classic  theorems   be  extended to non-collapsed $\mathrm{RCD}(0,n)$ spaces  $(X,d, \mathcal{H}^n)$ with Euclidean volume growth and $n\geq 3$?
\end{question}

\bibliographystyle{alpha}
\bibliography{reference}
\Addresses

\end{document}